\tikzstyle{vertex}=[circle]
\tikzstyle{goto}=[->,shorten >=1pt,>=stealth,semithick]
\newtheorem{thm}{Theorem}[section]
\theoremstyle{definition}
\theoremstyle{remark}
\numberwithin{equation}{section}
\newcommand{\CC}{\mathbb{C}}
\newcommand{\Q}{\mathcal{Q}}
\newcommand{\Zz}{\mathbb{Z}}
\newcommand{\Tz}{\mathbb{T}}
\newcommand{\iso}{\text{Iso}}
\newcommand{\cG}{\mathcal{G}}
\newcommand{\Et}{\widehat E_{\text{tight}}}
\newcommand{\gt}{\mathcal{G}_{\text{tight}}}
\newcommand{\vp}{\varphi}
\newcommand{\cF}{\mathcal{F}}
\newcommand{\tight}{\textup{tight}}
\newcommand{\Iso}{\textup{Iso}}
\begin{document}

\title[Corrigendum]{Corrigendum to ``A New Uniqueness Theorem for the Tight C*-algebra of an Inverse Semigroup'' [C. R. Math. Acad. Sci. Soc. R. Can. 44 (2022), no. 4, 88--112] }

\author{Chris Bruce} 
\address[C. Bruce]{School of Mathematics and Statistics, University of Glasgow, University Place, Glasgow G12 8QQ, United Kingdom}
\email{Chris.Bruce@glasgow.ac.uk}

\thanks{C. Bruce has received funding from the European Union’s Horizon 2020 research and innovation programme under the Marie Sklodowska-Curie grant agreement No 101022531 and from the European Research Council (ERC) under the European Union’s Horizon 2020 research and innovation programme 817597.}

\author{Charles Starling}
\address[C. Starling]{Carleton University, School of Mathematics and Statistics. 4302 Herzberg Laboratories}
\email{cstar@math.carleton.ca}

\thanks{C. Starling is partially funded by NSERC and by an internal Carleton research grant.}

\begin{abstract}
We correct the proof of Theorem~4.1 from [C. R. Math. Acad. Sci. Soc. R. Can. \textbf{44} (2022), no. 4, 88--112]. 
\end{abstract}

\maketitle

\section{Introduction}
\label{Intro}
There is a flaw in the proof of \cite[Theorem~4.1]{St22}. To explain this, let us recall the setup for \cite[Theorem~4.1]{St22}. Let $P$ be a right LCM monoid, i.e., a left cancellative monoid such that for all $p,q\in P$, the intersection $pP\cap qP$ is either empty or of the form $rP$ for some $r\in P$. Let $S=\{[p,q] : p,q\in P\}\cup\{0\}$ be the inverse semigroup associated with $P$ in \cite[Proposition~3.2]{StLCM}. 
(If $P$ is not left reversible, then $S$ is isomorphic to the left inverse hull of $P$ via the map that sends $0$ to $0$ and sends $[p,q]$ to the partial bijection $qP\to pP$ given by $qx\mapsto px$.) 
Let
\[
S^{\Iso}:=\{[p,q] :paP\cap qaP\neq\emptyset\text{ for all }a\in P\}
\]
be the inverse semigroup from \cite[Section~4]{St22}, and denote by $\cG_{\tight}(S^\Iso)$ and $\cG_{\tight}(S)$ the tight groupoids of $S^\Iso$ and $S$, respectively (see \cite{Ex08} and \cite{EP}). The C*-algebra $C_r^*(\cG_{\tight}(S))$ is the reduced boundary quotient C*-algebra of $P$. Since $\cG_{\tight}(S^\Iso)$ is identified with an open subgroupoid of $\cG_{\tight}(S)$, there is a canonical inclusion of reduced groupoid C*-algebras $C_r^*(\cG_{\tight}(S^\Iso))\subseteq C_r^*(\cG_{\tight}(S))$. Explicitly, we have 
\[
C_r^*(\cG_{\tight}(S))=\overline{\textup{span}}(\{T_{[p,q]} : [p,q]\in S\}),
\]
where $T_{[p,q]}$ is the characteristic function of the compact open bisection $\Theta([p,q],D_{qP})$ (see \cite[Section~3]{St22} for this notation), and $C_r^*(\cG_{\tight}(S^\Iso))$ is identified with the C*-subalgebra generated by the partial isometries $T_{[p,q]}$ for $[p,q]\in S^\Iso$.

Assume that $S$ satisfies condition (H) from \cite[Definition~3.1]{St22}, and suppose $\pi\colon C_r^*(\cG_{\tight}(S))\to B$ is a representation in a C*-algebra $B$. Then, \cite[Theorem~3.4]{St22} says that $\pi$ is injective if and only if its restriction to $C_r^*(\cG_{\tight}(S^\Iso))$ is injective. It is asserted in the proof of \cite[Theorem~4.1]{St22} that to prove this latter claim, it suffices to prove that $\pi$ is injective on the dense *-subalgebra $A_0:=\textup{span}(\{T_{[p,q]} : [p,q]\in S^\Iso\})$. However, this assertion is false: If we take $P=\Zz$, then $C_r^*(\cG_{\tight}(S^\Iso))=C_r^*(\cG_{\tight}(S))\cong C^*(\Zz)$, and under the canonical isomorphism $C^*(\Zz)\cong C(\Tz)$, $A_0$ is carried onto the *-subalgebra of Laurent polynomials in $C(\Tz)$. Given any infinite, proper compact subset $K\subseteq \Tz$, the map $C(\Tz)\to C(K)$ given by $f\mapsto f\vert_K$ is a non-injective *-homomorphism that is injective on $A_0$.

\section{The proof of \cite[Theorem~4.1]{St22}}

We give a proof of \cite[Theorem~4.1]{St22}. We shall use the notation from \cite{St22} freely. The core submonoid of $P$ is
\[
P_c:=\{p\in P : pP\cap qP\neq\emptyset\text{ for all } q\in P\},
\]
with associated inverse semigroup 
\[
S_c:=\{[p,q] : p,q\in P_c\}.
\]

\cite[Theorem~4.1]{St22} is stated with the assumption that the full and reduced groupoid C*-algebras of the tight groupoid $\gt(S)$ coincide. It is clear that the following version stated for reduced groupoid C*-algebras implies \cite[Theorem~4.1]{St22}.

\begin{thm}
\label{thm}
Let $P$ be a right LCM monoid and $S$ the associated inverse semigroup as in \cite[Proposition~3.2]{StLCM}, let $\Q_r(P)= C^*_r(\gt(S))$ denote its reduced boundary quotient C*-algebra, and let $\Q_{r,c}(P) = C^*(T_{[p,q]}: p,q\in P_c)\subseteq \Q_r(P)$ be the C*-subalgebra generated by the core submonoid. Suppose that $S$ satisfies condition (H) from \cite[Definition~3.1]{St22}. Then, a $*$-homomorphism $\pi: \Q_r(P)\to B$ to a C*-algebra $B$ is injective if and only if it is injective on $\Q_{r,c}(P)$. 	
\end{thm}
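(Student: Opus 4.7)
The ``only if'' direction is immediate since $\Q_{r,c}(P)\subseteq \Q_r(P)$. For the converse, assume $\pi|_{\Q_{r,c}(P)}$ is injective. The plan is to apply \cite[Theorem~3.4]{St22}, which under condition (H) reduces injectivity of $\pi$ on $\Q_r(P)$ to injectivity of $\pi$ on the isotropy subalgebra $C_r^*(\gt(S^\Iso))$, and then to propagate injectivity from $\Q_{r,c}(P)$ to $C_r^*(\gt(S^\Iso))$.

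To analyze $C_r^*(\gt(S^\Iso))$, I would use that $\gt(S^\Iso)$ identifies with an open subgroupoid of the isotropy bundle of $\gt(S)$, hence is an \'etale bundle of discrete groups over the tight spectrum; its reduced C*-algebra therefore admits a canonical faithful conditional expectation onto the diagonal and can be analyzed fiberwise. The key step is to construct a faithful conditional expectation $E\colon C_r^*(\gt(S^\Iso)) \to \Q_{r,c}(P)\cap C_r^*(\gt(S^\Iso))$ which is compatible with $\pi$, in the sense that there is a conditional expectation $E'$ on the image C*-algebra satisfying $E'\circ\pi = \pi\circ E$. A natural candidate is obtained by averaging over a topological group action --- for example, a coaction of an enveloping group of $P_c$ on $\Q_r(P)$ --- whose fixed-point algebra coincides with the target of $E$; the corresponding $E'$ is then the averaging of the induced action on the image side, which makes sense automatically if $\pi$ is equivariant for this coaction, and more generally follows by universality of the coaction on $\Q_r(P)$.

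With $E$ and $E'$ in hand, the standard argument closes the proof: if $a \in C_r^*(\gt(S^\Iso))$ satisfies $\pi(a)=0$, then $E'(\pi(a^*a))=0$, hence $\pi(E(a^*a))=0$; injectivity of $\pi$ on $\Q_{r,c}(P)$ gives $E(a^*a)=0$, and faithfulness of $E$ yields $a=0$. The main obstacle is the construction of $E$ and the verification of its compatibility with $\pi$: as the $\Zz$-counterexample in the introduction shows, density of the $*$-subalgebra $A_0$ in $C_r^*(\gt(S^\Iso))$ is \emph{not} enough, so closedness of $\Q_{r,c}(P)$ must be used in an essential way. A further subtlety is that $S_c$ need not be contained in $S^\Iso$ --- one can check, e.g.\ in $P=\NN\rtimes\NN^\times$, that there exist $p,q\in P_c$ with $[p,q]\notin S^\Iso$ --- so $\Q_{r,c}(P)$ is not literally a subalgebra of $C_r^*(\gt(S^\Iso))$. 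Consequently, the intersection $\Q_{r,c}(P)\cap C_r^*(\gt(S^\Iso))$ (or an isomorphic replacement) must be identified explicitly, most likely via an auxiliary restriction conditional expectation $\Q_r(P)\to C_r^*(\gt(S^\Iso))$ that sends $\Q_{r,c}(P)$ into a subalgebra of the isotropy algebra that is large enough to detect elements.
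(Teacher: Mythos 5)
Your high-level skeleton --- reduce to $C_r^*(\gt(S^\Iso))$ via \cite[Theorem~3.4]{St22}, then combine a faithful conditional expectation with a compatible map on the image and finish by the standard faithfulness argument --- is indeed the paper's strategy, and your side observation that $S_c\not\subseteq S^\Iso$ is correct (it already fails for $P=\NN\rtimes\NN^\times$, as you say). But the heart of your proof, namely the existence of a faithful expectation $E$ onto (a copy of) $\Q_{r,c}(P)\cap C_r^*(\gt(S^\Iso))$ together with an expectation $E'$ on the image satisfying $E'\circ\pi=\pi\circ E$, is precisely the part you do not prove, and the mechanism you propose for it cannot work in this generality. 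First, a right LCM monoid is only left cancellative, so neither $P$ nor $P_c$ is assumed to embed in any group: there is no ``enveloping group'' available to coact; and even when $P$ does embed in a group $G$, the fixed-point algebra of the induced coaction on $\Q_r(P)$ is the spectral subspace at the identity of $G$, which is not $\Q_{r,c}(P)$ (for the free monoid it is the UHF core of $\mathcal{O}_n$ while $\Q_{r,c}(P)=\CC$). Second, and more fundamentally, $\pi$ is an \emph{arbitrary} $*$-homomorphism, so you cannot assume it is equivariant for any coaction, and ``universality'' is not available: $\Q_r(P)=C_r^*(\gt(S))$ is a reduced C*-algebra, and in any case universality produces maps out of an algebra, not equivariance of a given map. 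So the compatibility $E'\circ\pi=\pi\circ E$ is assumed rather than established --- and this well-definedness question is exactly the pitfall that the $\Zz$-counterexample in the introduction warns about.

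What the paper does differently at exactly this point is the missing idea. It takes $E$ to be the canonical faithful expectation $\Q_r(P)\to C(\Et)$ onto the diagonal (which exists because condition (H) makes $\gt(S)$ Hausdorff), and proves by hand the norm estimate $\|\pi(E(a))\|\leq\|\pi(a)\|$ for $a$ in the dense $*$-subalgebra $\textup{span}(T_{[p,q]}:[p,q]\in S^\Iso)$; this is what makes $\pi(a)\mapsto\pi(E(a))$ well defined. The estimate is where injectivity of $\pi$ on $\Q_{r,c}(P)$ actually enters: an ultrafilter argument (using that $P$ is not left reversible, via \cite[Lemma~12.3]{Ex08}) produces $b\in P$ such that compression by the isometry $T_{[b,1]}$ annihilates the generators indexed outside a suitable subset $F'$ of the given finite sum (\cite[Lemma~4.2]{St22}) and carries the remaining generators \emph{into} $\Q_{r,c}(P)$ (\cite[Lemma~4.3]{St22}), where $\pi$ is isometric; separately, $\pi$ is isometric on $C(\Et)$ because $\ker\pi\cap C(\Et)=C_0(U)$ for an open invariant $U$, $\pi(1_{\Et})\neq 0$, and $\gt(S)$ is minimal by \cite[Lemma~4.2]{StLCM}. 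Note that the core algebra is never realized as the range of a conditional expectation, and no containment between $S_c$ and $S^\Iso$ is needed: $\Q_{r,c}(P)$ enters only through these compressions. If you want to salvage your outline, this compression step is what you would have to supply in place of the coaction averaging.
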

\begin{proof}
	Let $\pi:\Q_r(P)\to B$ be a $*$-homomorphism that is injective on $\Q_{r,c}(P)$. We wish to show that $\pi$ is injective, and by \cite[Theorem~3.4]{St22} it is enough to show that $\pi$ is injective on $A:= C^*(T_{[p,q]}: [p,q]\in S^\iso)$. If $P$ is left reversible, then $P=P_c$ and there is nothing to prove, so assume $P$ is not left reversible.
Since $S$ satisfies (H) from \cite[Definition~3.1]{St22}, the groupoid $\gt(S)$ is Hausdorff by \cite[Theorem~3.16]{EP}. Thus, we have a canonical faithful conditional expectation $E\colon \Q_r(P)\to C(\Et)$. We follow the strategy of the proof of \cite[Theorem~5.1]{LS22} and will show that there is a linear map $\vp$ defined on $\pi(A)$ such that $\vp\circ \pi(a) = \pi \circ E(a)$ for every $a\in A$. One can see that this amounts to showing that
	\[
	\pi(a) \mapsto \pi(E(a)), \hspace{1cm}a\in A
	\]
	is well-defined. We will be done if we show that $\|\pi(a)\| \geq \|\pi(E(a))\|$ for all $a$ in the canonical dense subalgebra $A_0:=\textup{span}(T_{[p,q]}: [p,q]\in S^\iso)$ of $A$. 
	
Let $a = \sum_{f\in F}\lambda_fT_{[p_f, q_f]}\in A_0$ be a finite linear combination of the generators of $A$, where $F$ is a finite index set, $[p_f, q_f]\in S^\iso$, and $\lambda_f\in\CC$.  
 For each $f\in F$, let $r_f$ be an element of $P$ such that $p_fP\cap q_fP = r_fP$. 
	
As a function on $\gt(S)$, the element $a$ is a linear combination of characteristic functions on the compact open bisections $\Theta([p_f,q_f], D_{r_{f}P})$ for $f\in F$. Here, we used that $D_{q_fP}=D_{r_fP}$ by \cite[Lemma~4.2]{St22}. By \cite[Proposition~3.14]{EP}, we have $E(a)=\sum_{f\in F}\lambda_f1_{\cF_{[p_f, q_f]}}$, where $\cF_{[p_f, q_f]}$ is a certain compact open subset of $D_{r_fP}$ (we shall not need the precise definition of $\cF_{[p_f, q_f]}$ here; that $\cF_{[p_f, q_f]}$ is compact open suffices for our purposes).

Each nonempty subset $F'\subseteq F$ determines a compact open subset of $\Et$ given by \begin{equation*}
		U_{F'}:= \bigcap_{f\in F'} D_{r_fP}\setminus \left(\bigcup_{g\in F\setminus F'} D_{r_gP}\right).
\end{equation*}
 Since $\bigcup_{f\in F}D_{r_fP}=\bigsqcup_{\emptyset\neq F'\subseteq F}U_{F'}$ and $\cF_{[p_f, q_f]}\subseteq D_{r_fP}$ for all $f\in F$, the support of $E(a)$ is contained in $\bigsqcup_{\emptyset\neq F'\subseteq F}U_{F'}$. Thus, there exists $\emptyset\neq F'\subseteq F$ such that 
 \[
 \|E(a)\|=\max\{|a(u)| : u \in U_{F'}\}.
 \]
 If $E(a)=0$, then clearly $\|\pi(E(a))\|\leq \|\pi(a)\|$, so we may assume $E(a)\neq 0$, in which case $U_{F'}$ is nonempty. Since the ultrafilters are dense in $\Et$ and $E(a)=a\vert_{\Et}$ takes on only finitely many values on $\Et$, we can find an ultrafilter $\xi\in U_{F'}$ such that $\|E(a)\|=|a(\xi)|$.
	
Note that $r_fP\in \xi$ for all $f\in F'$ and $r_gP\not\in\xi$ for all $g\in F\setminus F'$. Since $P$ is not left reversible and $\xi$ is an ultrafilter, for each $g\in F\setminus F'$ we can find $k_g\in P$ such that $k_gP\in\xi$ and $k_gP\cap r_gP = \emptyset$ (see \cite[Lemma~12.3]{Ex08}). Since $\xi$ is a filter, we have 
	\[
 \bigcap_{f\in F'}r_fP\cap \bigcap_{g\in F\setminus F'}k_gP\neq \emptyset,
 \]
and this intersection must be of the form $bP$ for some $b\in P$ with $bP\in\xi$. Moreover, we have $bP\subseteq r_fP$ for all $f\in F'$ and $bP\cap r_gP = \emptyset$ for all $g\in F\setminus F'$, so that $D_{bP}\subseteq U_{F'}$. Since $\|E(a)\|=|E(a)(\xi)|$, $\xi\in D_{bP}$, and $T_{[b,b]}=1_{D_{bP}}$, we have $\|E(a)\| =\|T_{[b,b]}E(a)\|$. Moreover, since $bP\cap r_gP=\emptyset$ for all $g\in F\setminus F'$, we have for $\eta\in D_{bP}$ that $E(T_{[p_f,q_f]})(\eta)=0$ unless $f\in F'$ (note that $E(T_{[p_f,q_f]})$ has support in $D_{r_fP}$ by \cite[Proposition~3.14]{EP}). Hence,
	\begin{equation}
     \label{eqn:norm}
	\|E(a)\| =\|T_{[b,b]}E(a)\|=\sup_{\eta\in D_{bP}}|T_{[b,b]}(\eta)E(a)(\eta)|=\left\|T_{[b,b]}E\left(\sum_{f\in F'}\lambda_fT_{[p_f,q_f]}\right)T_{[b,b]}\right\|. 
	 \end{equation} 
	
Now \cite[Lemma~4.2]{St22} implies $T_{[b,1]}^*T_{[p_g,q_g]}T_{[b,1]} = 0$ for all $g\in F\setminus F'$, while \cite[Lemma~4.3]{St22} implies that $T_{[b,1]}^*T_{[p_f,q_f]}T_{[b,1]} \in \Q_{r,c}(P)$ for all $f\in F'$. 
We have $\ker(\pi)\cap C(\Et)=C_0(U)$, where $U\subseteq \Et$ is an open invariant subset. Since $1_{\Et}\in\Q_{r,c}(P)$, $\pi(1_{\Et})\neq 0$, so that $U$ is a proper subset of $\Et$. The groupoid $\gt(S)$ is minimal by \cite[Lemma~4.2]{StLCM}, so $U$ must be empty. Thus, $\pi$ is injective -- and hence isometric -- on $C(\Et)$, so that $\|E(a)\| = \|\pi(E(a))\|$.  
Thus, we can make the following estimate:
		
\begin{align*}
	\|\pi(a)\| &= \left\|\pi\left(\sum_{f\in F}\lambda_fT_{[p_f, q_f]}\right)\right\|\\
	           &\geq \left\|\pi(T_{[b,1]}^*)\pi\left(\sum_{f\in F}\lambda_fT_{[p_f, q_f]}\right)\pi(T_{[b,1]})\right\|&\text{submultiplicativity, $\pi(T_{[b,1]})$ an isometry}\\
	           &= \left\|\pi\left(\sum_{f\in F'}\lambda_fT_{[b,1]}^*T_{[p_f, q_f]}T_{[b,1]}\right)\right\|&\text{by choice of $b$}\\
	           & = \left\|\sum_{f\in F'}\lambda_fT_{[b,1]}^*T_{[p_f, q_f]}T_{[b,1]}\right\|&\text{$\pi$ is isometric on $\Q_{r,c}(P)$}\\
	           &=  \left\|\sum_{f\in F'}\lambda_fT_{[b,b]}T_{[p_f, q_f]}T_{[b,b]}\right\|&\text{submultiplicativity, $T_{[b,1]}$ an isometry}\\
	           &\geq  \left\|E\left(\sum_{f\in F'}\lambda_fT_{[b,b]}T_{[p_f, q_f]}T_{[b,b]}\right)\right\|&\text{$E$ is contractive}\\
            &=\left\|T_{[b,b]}E\left(\sum_{f\in F'}\lambda_fT_{[p_f, q_f]})\right)T_{[b,b]}\right\|&\text{$T_{[b,b]}$ is in the multiplicative domain of $E$}\\
	           & = \|E(a)\| & \text{by \eqref{eqn:norm}}\\
	           & = \|\pi(E(a))\| & \text{$\pi$ is isometric on $C(\Et)$}.
\end{align*}

Thus, the map $\pi(a) \mapsto \pi(E(a))$ is a well-defined linear idempotent contraction on the dense *-subalgebra $\pi(A_0)$ of $\pi(A)$, so it extends to a linear map on $\pi(A)$.  

To complete the proof, suppose that $\pi(x) = 0$ for some $x\in A$. Then, $\pi(x^*x) = 0$, implying $\pi(E(x^*x)) = 0$. Since $\pi$ is faithful on the image of $E$ we must have $E(x^*x) = 0$, and since $E$ is faithful we get $x^*x = 0$, implying $x=0$.  
\end{proof}

\section*{Acknowledgement} C. Bruce would like to thank Kevin Aguyar Brix for several helpful conversations and Adam Dor-On for comments on a draft version. C. Starling thanks Ruy Exel and Marcelo Laca for valuable insights.

\end{document}